\documentclass[letterpaper,11pt,twoside]{article}
\usepackage{times}
\usepackage{amsmath}
\usepackage{amssymb}
\usepackage{amsfonts}
\usepackage{amscd}
\usepackage{amsthm}
\usepackage{amsxtra}
\usepackage{mathtools}
\usepackage{stmaryrd}
\usepackage[all]{xy}  \CompileMatrices
\usepackage{mathrsfs}
\usepackage{nicefrac}
\usepackage{graphicx}
\usepackage{color}
\usepackage[raggedright,IT,hang]{subfigure}
\usepackage{wrapfig}
\usepackage{enumerate}
\usepackage{textcomp}
\usepackage{float}

\usepackage{tikz}
\usetikzlibrary{intersections}

\interfootnotelinepenalty=10000

\newcommand{\R}{\mathbb{R}}
\DeclareMathOperator{\affOp}{aff}
\newcommand{\aff}{\affOp}
\DeclareMathOperator{\idOp}{id}
\newcommand{\id}{\idOp}
\DeclareMathOperator{\stabOp}{STAB}
\newcommand{\stab}{\stabOp}
\DeclareMathOperator{\tsOp}{TS}
\newcommand{\ts}{\tsOp}

\DeclareMathOperator{\vertOp}{vert}
\newcommand{\vertex}{\vertOp}

\DeclareMathOperator{\convOp}{conv}
\newcommand{\conv}{\convOp}

\newcommand{\setDef}[2]{\{{#1}:{#2}\}}


\newtheorem{prop}{Proposition}
\newtheorem{remark}[prop]{Remark}
\newtheorem{lem}[prop]{Lemma}
\newtheorem{obs}[prop]{Observation}
\newtheorem{thm}[prop]{Theorem}

\begin{document}

\begin{center}\LARGE The Projected Faces Property and Polyhedral Relations.
\\[\baselineskip]
\large Michele Conforti\footnote{Dipartimento di Matematica, Universit\`a degli Studi di Padova, Via Trieste 63, 35121 Padova, Italy ({\tt conforti@math.unipd.it}). Supported by ``Progetto di Eccellenza 2008--2009'' of ``Fondazione Cassa di Risparmio di Padova e Rovigo''.},
\large Kanstantsin Pashkovich\footnote{Universit\'{e} libre de Bruxelles, D\'{e}partement de Math\'{e}matique, Boulevard du Triomphe, B-1050 Brussels, Belgium ({\tt kanstantsin.pashkovich@gmail.com}). Supported by ``Progetto di Eccellenza 2008--2009'' of ``Fondazione Cassa di Risparmio di Padova e Rovigo'' and Fonds de la Recherche Scientifique de Belgique (FRS-FNRS).}
\\[\baselineskip]
\today
\end{center}

\begin{abstract}
Margot (1994) in his doctoral dissertation studied extended formulations of combinatorial polytopes  that arise from "smaller" polytopes via some composition rule.  He introduced the "projected faces property" of a polytope and showed that this property suffices  to iteratively build extended formulations of composed polytopes.

For the composed polytopes, we show that an extended formulation of the type studied in this paper is always possible only if the smaller polytopes have the projected faces property. Therefore, this produces a characterization of the projected faces property.

  Affinely generated polyhedral relations were introduced by  Kaibel and Pashkovich (2011) to construct extended formulations for the convex hull of the images of a point under the action of some finite group of reflections. In this paper we prove that the projected faces property and affinely generated polyhedral relation are equivalent conditions.

\end{abstract}

\section{Introduction}

Margot \cite{FM} in his doctoral dissertation studied extended formulations for combinatorial polytopes defined on classes of graphs that are closed under some composition rule.

There are not many paradigms to construct extended formulations, see \cite{CCZ}, \cite{K} for a survey. So the main purpose of this paper is to introduce, illustrate with examples and analyze the framework constructed by Margot.
 \smallskip

 We refer the reader to \cite{SC} for polyhedral theory. Let~$P\subseteq\R^{n}\times\R^{d}$ be a polytope. Its {\em projection} into $\R^{n}$ is the polytope  $p_x(P):=\{x\in \R^n:\; \exists y\in \R^d \text{ such that }\,(x,y)\in P\}$,  where $p_x:\R^{n}\times\R^{d}\rightarrow \R^n$ is the projection map into $\R^{n}$, i.e. in the $x$~variables.
Note that $p_x(P)=\conv(\setDef{p_x(v)}{v\in\vertex(P)})$, where~$\vertex(P)$ denotes the vertex set of a polytope~$P$.

 An {\em extended formulation} of a polytope $P\subseteq\R^{n}$ is a system of inequalities $Ax+By\le d$ that defines a polytope $Q\subseteq\R^{n}\times\R^{d}$ such that $P=p_x(Q)$. The polytope $Q\subseteq\R^{n}\times\R^{d}$ together with $p_x:\R^{n}\times\R^{d}\rightarrow \R^{n}$ is called an \emph{extension} of $P$.   Since
 $$\max \{cx:\; x\in P\}=\max\{ cx+\mathbf{0}y:\; Ax+By\le d\}\,,$$
an extended formulation of small size allows to optimize a linear function over $P$ by solving a linear optimization problem involving a  small number of linear inequalities.
 \smallskip

In his dissertation,  Margot addresses the following problem:
\begin{equation}\label{prob:Margot}
\begin{multlined}
 \text{\parbox{.83\textwidth}{\raggedright
    {\it Given two polytopes $P_1\subseteq\R^{n_1}\times\R^{d_1}$ and $P_2\subseteq \R^{n_2}\times\R^{d_2}$ and a function $f:\R^{n_1}\times\R^{n_2}\rightarrow\R^n\cup\{\infty\}$, provide a linear description of the polytope $P$ which is  the convex hull of the following set of points}}}\\
    (\gamma ,x,y)\in\R^n\times \R^{d_1}\times\R^{d_2}   \text{\it such that } (\alpha,x)\in \vertex(P_1),\\ (\beta ,y)\in \vertex(P_2) \, \text{\it  and }
    \gamma =f(\alpha,\beta )\neq \infty\\ \text{\it for some  } \alpha\in\R^{n_1}, \beta \in\R^{n_2}\,.
    \end{multlined}
\end{equation}

\smallskip


\smallskip

A polytope $P\subseteq \R^n\times \R^d$,  together with the projection map~$p_x:\R^n\times \R^d\rightarrow \R^n$  has the {\em PF-property} (projected faces property) if every face of the polytope~$P$ is projected to a face of the polytope~$p_x(P)$. That is  for every face~$F\subseteq \R^{n}\times\R^{d}$ of the polytope~$P$ the projection~$p_x(F)\subseteq\R^n$ is a face of the polytope~$p_x(P)$ (see Figure~\ref{fig:pfp}).
\begin{figure}[H]
  \begin{center}
    \begin{tikzpicture}[scale=1.4]
      \foreach \x/\y in {0/0,70/1,100/2,180/3,220/4,300/5} {\coordinate (R-\y) at ({cos(\x)}, {sin(\x)*.3+.5*cos(\x)+.7});};
      \foreach \y  in {0,...,5} { \pgfmathparse{mod(\y+1,6)} \draw (R-\y)--(R-\pgfmathresult);}
      \foreach \x/\y in {0/0,70/1,100/2,180/3,220/4,300/5} {\coordinate (L-\y) at ({cos(\x)}, {sin(\x)*.3-2});};
      \foreach \y  in {0,...,5} { \pgfmathparse{mod(\y+1,6)} \draw (L-\y)--(L-\pgfmathresult); \draw (L-\y)--(R-\y);}
      \draw[dashed, white] (L-1)--(R-1);
      \draw[dashed, white] (L-2)--(R-2);
      \draw[dashed, white] (L-0)--(L-1)--(L-2)--(L-3);
    \end{tikzpicture}
    \hspace{1cm}
    \begin{tikzpicture}[scale=1.4]
      \foreach \x/\y in {0/0,70/1,100/2,180/3,220/4,300/5} {\coordinate (R-\y) at ({cos(\x)}, {sin(\x)*.3});};
      \foreach \x/\y in {0/0,100/2,220/4} {\coordinate (R-\y) at ({cos(\x)}, {sin(\x)*.3+1});};
      \foreach \y  in {0,...,5} { \pgfmathparse{mod(\y+1,6)} \draw (R-\y)--(R-\pgfmathresult);}
      \draw[very thick] (R-0)--(R-2)--(R-4)--(R-0);
      \foreach \x/\y in {0/0,70/1,100/2,180/3,220/4,300/5} {\coordinate (L-\y) at ({cos(\x)}, {sin(\x)*.3-2});};
      \foreach \y  in {0,...,5} { \pgfmathparse{mod(\y+1,6)} \draw (L-\y)--(L-\pgfmathresult); \draw (L-\y)--(R-\y);
      \draw[dashed, white] (L-1)--(R-1);
      \draw[dashed, white] (L-2)--(R-2);
      \draw[dashed, white] (L-0)--(L-1)--(L-2)--(L-3);
      \draw[dashed, white] (R-0)--(R-1)--(R-2)--(R-3);
}
    \end{tikzpicture}
    \label{fig:pfp}
    \caption{The left three-dimensional polytope together with its projection on the space defining the lower facet possesses the PF-property, the right three-dimensional polytope with the corresponding projection does not possess the PF-property; for example, the triangle facet above is not projected to any face of the projection.}
  \end{center}
\end{figure}
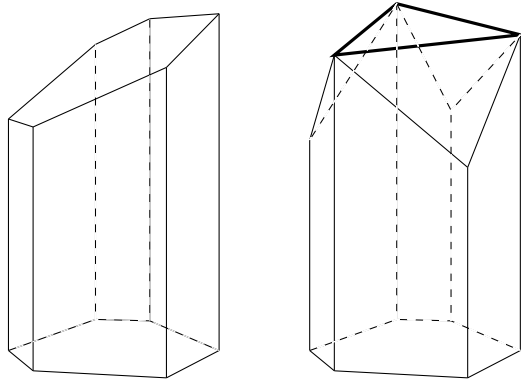

\smallskip
Let us define the following polytope
\begin{multline*}
  P_3:=\conv(  (\gamma,\alpha,\beta)\in \R^n\times \R^{n_1}\times\R^{n_2} \text{ such that } \\\alpha \in\vertex(p_{\alpha}(P_1))\, , \beta \in\vertex(p_{\beta}(P_2))\,,\gamma=f(\alpha,\beta)\text{  and  } \gamma \neq \infty)\,.
\end{multline*}
Thus, $\vertex(P_3)$ represent all pairs of a vertex $\alpha\in\vertex(p_{\alpha}(P_1))$ and a vertex $\beta \in\vertex(p_{\beta}(P_2))$, which are feasible for $f:\R^{n_1}\times\R^{n_2}\rightarrow\R^n\cup\{\infty\}$, i.e. $f(\alpha,\beta)\neq\infty$.  Additionally, to every vertex of $P_3$ the corresponding $f(\alpha, \beta)$ is assigned.

Margot~\cite{FM} shows the following:

\begin{thm}\label{thm:margot_theorem} Given systems $A_1(\alpha, x)\le b_1$, $A_2(\beta,y)\le b_2$ and $A_3 (\gamma,\alpha, \beta)\le b_3$ defining the polytopes  $P_1$, $P_2$ and $P_3$  as  above,
let $Q:=\{(\gamma,\alpha,x,\beta, y)\in  \R^n\times\R^{n_1}\times\R^{d_1}\times \R^{n_2}\times\R^{d_2}:\;A_1(\alpha,x)\le b_1, A_2(\beta,y)\le b_2, A_3 (\gamma,\alpha, \beta)\le b_3\}$ and let $P$ be as defined in \eqref{prob:Margot}.

If both~$(P_1,p_{\alpha})$  and~$(P_2, p_{\beta})$  have  the PF-property and every vertex of~$P_3$ projects into a vertex of~$p_{\gamma}(P_3)$, then
\begin{enumerate}[a)]
  \item \label{enum:projection}  $P=p_{\gamma, x ,y}(Q)$.
  \item \label{enum:pfp} $(Q, p_{\gamma,\alpha,\beta})$ has the PF-property.
\end{enumerate}
\end{thm}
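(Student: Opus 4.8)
The plan is to isolate one structural fact about the faces of $Q$ and then feed the PF-properties of $P_1$ and $P_2$ into it. Write $q_1,q_2,q_3$ for the restrictions to $Q$ of the coordinate projections onto the $(\alpha,x)$-, $(\beta,y)$- and $(\gamma,\alpha,\beta)$-spaces; thus $q_3=p_{\gamma,\alpha,\beta}|_Q$, and one checks directly that $q_3(Q)=P_3$ (each vertex of $P_3$ has $\alpha\in\vertex(p_\alpha(P_1))$, $\beta\in\vertex(p_\beta(P_2))$, hence a preimage in $Q$), while $q_i(Q)\subseteq P_i$ for $i=1,2$. Now fix $\bar z\in Q$ and let $F_i$ be the minimal face of $P_i$ containing $q_i(\bar z)$. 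Since the defining system of $Q$ is the disjoint union of $A_1(\alpha,x)\le b_1$, $A_2(\beta,y)\le b_2$, $A_3(\gamma,\alpha,\beta)\le b_3$, and each inequality of $A_i$ involves only the coordinates onto which $q_i$ projects, an inequality of $A_i$ is tight at $\bar z$ iff it is tight at $q_i(\bar z)$; hence the minimal face of $Q$ containing $\bar z$ is $\widehat F(\bar z):=q_1^{-1}(F_1)\cap q_2^{-1}(F_2)\cap q_3^{-1}(F_3)\cap Q$. The key consequence I would prove is: \emph{every vertex of $F_3$ is $q_3$ of a point of $\widehat F(\bar z)$, which can moreover be taken to be a vertex of $Q$ when $\bar z$ is.} Indeed $q_3(\bar z)$ lies in the relative interior of $F_3$, hence is a positive convex combination of the vertices of $F_3$; each such vertex is a vertex of $P_3$, i.e.\ of the form $(f(\alpha,\beta),\alpha,\beta)$ with $\alpha\in\vertex(p_\alpha(P_1))$, $\beta\in\vertex(p_\beta(P_2))$, $f(\alpha,\beta)\neq\infty$. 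By the PF-property of $(P_1,p_\alpha)$ the set $p_\alpha(F_1)$ is a \emph{face} of $p_\alpha(P_1)$; since the $\alpha$-part of $q_3(\bar z)$ lies in it and is a positive combination of the above $\alpha$'s, each such $\alpha$ lies in $p_\alpha(F_1)$, hence is a vertex of it, hence is the image of a vertex of $F_1$. Symmetrically for the $\beta$'s and $F_2$. Lifting this way, $(f(\alpha,\beta),\alpha,x,\beta,y)\in\widehat F(\bar z)$ with $(\alpha,x)$ a vertex of $F_1$ and $(\beta,y)$ a vertex of $F_2$.

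For part a): $P\subseteq p_{\gamma,x,y}(Q)$ is immediate, since a generating point $(f(\alpha,\beta),x,y)$ of $P$ has $\alpha\in\vertex(p_\alpha(P_1))$ and $\beta\in\vertex(p_\beta(P_2))$ by the PF-properties, so $(f(\alpha,\beta),\alpha,\beta)\in P_3$ and $(f(\alpha,\beta),\alpha,x,\beta,y)\in Q$. Conversely it suffices to show $p_{\gamma,x,y}(v)\in P$ for every vertex $v$ of $Q$. Apply the structural fact with $\bar z=v$, so $\widehat F(v)=\{v\}$: fixing any vertex $(f(\alpha,\beta),\alpha,\beta)$ of $F_3$ and lifting as above, $(f(\alpha,\beta),\alpha,x,\beta,y)\in\widehat F(v)=\{v\}$, hence $v=(f(\alpha,\beta),\alpha,x,\beta,y)$ with $(\alpha,x)\in\vertex(P_1)$, $(\beta,y)\in\vertex(P_2)$, $f(\alpha,\beta)\neq\infty$, so $p_{\gamma,x,y}(v)=(f(\alpha,\beta),x,y)$ is a generating point of $P$. (As a byproduct $F_1,F_2,F_3$ are singletons, so vertices of $Q$ are built from vertices of $P_1,P_2,P_3$.)

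For part b): let $F$ be a face of $Q$ and pick $\bar z$ in its relative interior, so $F=\widehat F(\bar z)$. As $q_3$ is affine, $q_3(\bar z)$ lies in the relative interiors of both $q_3(F)$ and $F_3$; since $q_3(F)\subseteq q_3(Q)=P_3$ and $F_3$ is a face of $P_3$ containing $q_3(\bar z)$, every vertex of $q_3(F)$ lies in $F_3$, so $q_3(F)\subseteq F_3$, while the structural fact gives $F_3\subseteq q_3(F)$. Hence $p_{\gamma,\alpha,\beta}(F)=q_3(F)=F_3$, a face of $p_{\gamma,\alpha,\beta}(Q)=P_3$.

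The step I expect to be the main obstacle is the lifting inside the structural fact: realizing the $\alpha$-coordinate of a vertex of $F_3$ by a point (indeed a vertex) of the face $F_1$ of $P_1$. This is precisely where the PF-property is essential — it forces $p_\alpha(F_1)$ to be a face, so that the convex combination expressing the $\alpha$-part of $q_3(\bar z)$ cannot escape it; for a general projection this breaks, and with it the whole scheme. (Only the PF-properties of $P_1$ and $P_2$ seem to be needed for a) and b); the hypothesis relating the vertices of $P_3$ to $p_\gamma(P_3)$ does not seem to enter either part.)
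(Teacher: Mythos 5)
Your proof is correct, but it takes a genuinely different route from the paper's. The paper argues modularly: it first uses Lemma~\ref{lem:pfp_property_prod} to show that $(p_{\gamma}(P_3)\times P_1\times P_2,\,p_{\gamma,\alpha,\beta})$ has the PF-property, then writes $Q$ as the intersection of this product with $p_{\gamma,\alpha,\beta}^{-1}(\conv(S))$ for $S=\vertex(P_3)$ and invokes the characterization of the PF-property (Theorem~\ref{thm:pfp_property_main}) to identify $\vertex(Q)$ and deduce part a), with part b) following from Proposition~\ref{prop:pfp_property_intersection_preserves_pfp}. You instead analyze the minimal faces of $Q$ directly: the tight-row description gives $\widehat F(\bar z)=q_1^{-1}(F_1)\cap q_2^{-1}(F_2)\cap q_3^{-1}(F_3)\cap Q$ because the three subsystems share no inequalities; writing $q_3(\bar z)$ as a positive convex combination of $\vertex(F_3)$ and using that $p_\alpha(F_1)$, $p_\beta(F_2)$ are faces (this is exactly where the PF-property enters) lets you lift each vertex of $F_3$ to a point of $\widehat F(\bar z)$ built from vertices of $F_1$ and $F_2$. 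I checked the lifting step and the deductions of a) and b) from it; they are sound. What your route buys, besides being self-contained (no appeal to Theorem~\ref{thm:pfp_property_main} or the product/intersection lemmas), is the observation you make at the end: the hypothesis that every vertex of $P_3$ projects to a vertex of $p_\gamma(P_3)$ is indeed never used, so your argument proves a slightly stronger statement. That hypothesis is, however, genuinely needed for the paper's route, since Theorem~\ref{thm:pfp_property_main} requires $S\subseteq\vertex(p_{\gamma,\alpha,\beta}(p_\gamma(P_3)\times P_1\times P_2))=\vertex(p_\gamma(P_3))\times\vertex(p_\alpha(P_1))\times\vertex(p_\beta(P_2))$, which can fail for $S=\vertex(P_3)$ when a vertex of $P_3$ has a non-vertex $\gamma$-component. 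In a polished write-up you should spell out two small steps you currently compress: that a vertex of $p_\alpha(P_1)$ lying in the face $p_\alpha(F_1)$ is a vertex of that face and hence the image of a vertex of $F_1$, and that a relative-interior point of a polytope is a positive convex combination of \emph{all} its vertices (so that every vertex of $F_3$ actually appears in the lifting argument).
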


After illustrating with examples in Section~\ref{sec:examples}, in Section~\ref{sec:characterization} we prove our main result, namely that the PF-property is both necessary and sufficient for Theorem~\ref{thm:margot_theorem} to hold for every choice of function $f$ as defined in Problem~\eqref{prob:Margot}.
 In Section~\ref{sec:framework} we prove Theorem~\ref{thm:margot_theorem} and discuss iterative use of this theorem. Finally, in Section~\ref{sec:affine_generated_relations} we show that the PF-property  is an alternative way to see affinely generated polyhedral relations defined in~\cite{KP}.

\section{Combinatorial Polytopes and PF-Property}\label{sec:examples}

The following examples illustrate the role of the problem~\eqref{prob:Margot} and the PF-property with respect to extended formulations of combinatorial polytopes.

\subsection{Parity Polytope}

The \emph{parity polytope~$Q_n$} is the convex hull of the $n$-dimensional $0,1$-points with an even number of coordinates equal to~$1$. For $n\ge 4$ the polytope~$Q_n$ can be described by the linear system below~\cite{Je}
\begin{equation*}
\sum_{i \in S} x_i -\sum_{\substack{i \not \in S\\ i\in \{1,\ldots,n\}}} x_i \le |S|-1\qquad \text{ for } S\subseteq \{1,\ldots, n\},\, |S|\text{ odd}
\end{equation*}
and $\mathbf{0}\le x\le \mathbf{1}$.
Furthermore, all $2^{n-1}+2n$ inequalities in the above description induce facets of $Q_n$~\cite{Je}, thus every linear description of~$Q_n$ in the initial space involves an exponential number of inequalities. Nevertheless there is an extended formulation for the parity polytope~$Q_n$ of size~$4(n-1)$~\cite{CK}.

Let us iteratively construct an extension for the parity polytope starting with $Q_1=\{(0)\}$ and using Theorem~\ref{thm:margot_theorem}. For this let us define $P_1:=Q_n$, where $P_1=\setDef{(\alpha,x)\in \R\times\R^{n-1}}{(\alpha,x)\in Q_n}$ and $P_2\subseteq \R\times \R^2$ to be the simplex $T$ with the vertex set $\{(0,0,0), (0,1,1), (1,0,1), (1,1,0)\}$, where $P_2=\setDef{(\beta,y)\in \R\times\R^2}{(\beta,y)\in T}$, and let the function $f:\R\times\R\rightarrow \R\cup\{\infty\}$ be as follows
\begin{equation*}
 f(\alpha,\beta):=\begin{cases}
                   \alpha&\text{if}\quad \alpha=\beta\\
		   \infty&\text{otherwise}
                  \end{cases}\,.
\end{equation*}
In this case,  the polytope $P$ defined in \eqref{prob:Margot} equals the convex hull of the following points
\begin{multline*}
    (\gamma,x,y)\in\R\times\R^{n-1}\times \R^2\\ \text{ such that }(\alpha,x)\in \vertex(Q_n)\,, (\beta ,y)\in \vertex(T)\,,\\\gamma =f(\alpha,\beta )\text{  and  } \gamma \neq \infty \text{ for some  } \alpha\in\R, \beta \in\R\,.
\end{multline*}
It is straightforward to see that $p_{x,y}(P)$ equals the parity polytope $Q_{n+1}$.

Let us check the conditions of  Theorem~\ref{thm:margot_theorem} corresponding to the above polytopes $P_1$, $P_2$ and the function $f$. The projections of $Q_n$ and $T$ in the first variable satisfy the PF-property. Indeed, the projections for both $Q_n$ and $T$ form a simplex of dimension one, namely the interval with endpoints $(0)$ and $(1)$. 
The polytope $P_3$ is the convex hull of two points $(0,0,0)$ and $(1,1,1)$, i.e. 
$$P_3=\setDef{(\gamma,\alpha,\beta)\in \R\times\R\times\R}{\gamma=\alpha=\beta,\, 0\le\beta\le 1},$$  and thus every vertex of $P_3$ is projected into a vertex of $p_{\gamma}(P_3)$.

Hence, all conditions of Theorem~\ref{thm:margot_theorem} hold. Thus, $Q:=\{(\gamma,\alpha,x,\beta, y)\in  \R\times\R\times\R^{n-1}\times \R\times\R^{2}:\;A_1(\alpha,x)\le b_1, A_2(\beta,y)\le b_2, A_3 (\gamma,\alpha, \beta)\le b_3\}$ together with the projection $p_{\gamma,x,y}$ forms an extension of $P$; subsequently, $Q$ and $p_{x,y}$ form an extension of $Q_{n+1}$.  Note, that the system $A_1(\alpha,x)\le b_1$ may be replaced by an extended formulation for $P_1$; $A_2(\beta,y)\le b_2$ is the description of the simplex $T$, i.e. $A_2(\beta,y)\le b_2$ may be assumed to consist of four inequalities. Moreover, in the definition of $Q$ the constraints $A_3 (\gamma,\alpha, \beta)\le b_3$ may be replaced by the constraint $\gamma=\alpha=\beta$, since the inequalities $0\le \beta\le 1$ are implied by the system $A_2(\beta,y)\le b_2$ describing the simplex $T$.

Hence, the polytope $Q$ has an extended formulation with $s+4$ inequalities whenever $Q_n$ has an extended formulation with $s$ inequalities. This fact leads to an extended formulation for $Q_{n+1}$ of size~$4n$, since the polytope $Q_1=\{(0)\}$ can be described by equations, i.e. does not need inequalities in its description.

\subsection{Stable Set Polytope}\label{example:stable_set}
 Let us consider the stable set polytope~$\stab(G)$ for a graph~$G(V,E)$ with a cutset~$U\subseteq V$. Let the  graph~$G$ be decomposed  via the cutset~$U$ into graphs~$G_1(V_1\cup U,E_1)$, $G_2(V_2\cup U,E_2)$ and let~$f:\R^{|U|}\times \R^{|U|}\rightarrow\R^{|U|}\cup\{\infty\}$ be defined as follows
\begin{equation*}
 f(\alpha,\beta):=\begin{cases}
                   \alpha&\text{if}\quad \alpha=\beta\\
		   \infty&\text{otherwise}
                  \end{cases}\,.
\end{equation*}
Then the stable set polytope~$\stab(G)$ of the graph~$G$ is the convex hull of the points
\begin{multline*}
    (\gamma ,x,y)\in\R^{|U|}\times \R^{|V_1|}\times\R^{|V_2|}\\ \text{ such that }(\alpha,x)\in \vertex(\stab(G_1))\,, (\beta ,y)\in \vertex(\stab(G_2))\,,\\\gamma =f(\alpha,\beta )\text{  and  } \gamma \neq \infty \text{ for some  } \alpha\in\R^{|U|}, \beta \in\R^{|U|}\,.
\end{multline*}

The polytope $\stab(G[U])$, where $G[U]$ is the subgraph induced by $U$, is the projection of $\stab(G)$ into the space indexed by the  node set~$U$.  Since~$\stab(G)$  is a~$0,1$-polytope, the vertices of~$\stab(G)$ project onto vertices of~$\stab(G[U])$.

If~$G[U]$ is a clique,   $\stab(G[U])$ is the convex hull of~$\bf 0$ and the $|U|$~unit vectors in $\R^{|U|}$. Hence~$\stab(G[U])$ is a simplex, i.e.  every subset of vertices of~$\stab(G[U])$ is a face of~$\stab(G[U])$. This shows that every face of~$\stab(G)$ projects to a face of~$\stab(G[U])$. Hence the polytope~$\stab(G)$ and the projection into the space indexed by the  node set~$U$ have the PF-property.

For the stable set polytope, Theorem~\ref{thm:margot_theorem} implies the following  result of Chv\'atal~\cite{CHV}: Given a graph~$G(V,E)$ and a clique cutset~$U$ decomposing $G$ into~$G_1$ and~$G_2$ a linear description for~$\stab(G)$ can be obtained from linear descriptions for the polytopes~$\stab(G_1)$ and~$\stab(G_2)$ by associating the variables indexed by~$U$.
\smallskip
\begin{remark}
It is not hard to prove that for an induced subgraph~$G'(V',E')$ of the graph~$G(V,E)$: the polytope~$\stab(G)$ and the projection into the space indexed by the  node set~$V'$ has the PF-property if and only if two nodes in~$V'$ are adjacent whenever there is a path in~$G$ between these nodes with all inner nodes in~$V\backslash V'$. To show the "only if" direction consider a face of the polytope~$\stab(G)$ induced by the shortest path between arbitrary two non adjacent nodes in~$V'$ with all inner nodes in~$V\backslash V'$: the sum of two variables for every edge in the path equals~$1$, the variables indexed by~$V'$ except the endpoints of the path equal~$0$. In this case the defined face projects on a diagonal of a 2-dimensional face of the polytope~$\stab(G')$ which is a square. The other direction is implied by Theorem~\ref{thm:margot_theorem} and Proposition~\ref{prop:pfp_property_intersection_preserves_pfp}.
\end{remark}

\subsection{Travelling Salesman Polytope}

Let $G$ be a connected graph and $H=\delta(S)$, $S \subseteq V$ be a cutset of the graph. Let $G_1(V_1, E_1\cup H)$, $G_2(V_2, E_2\cup H)$ be obtained from $G$ by contracting $S$, $V\setminus S$ respectively into a single vertex.

If the cutest $H$ consists of at most three edges the travelling salesman polytope~$\ts(G)$ of the graph~$G$ is the convex hull of the points
\begin{multline*}
    (\gamma ,x,y)\in\R^{|H|}\times \R^{|E_1|}\times\R^{|E_2|}\\ \text{ such that }(\alpha,x)\in \vertex(\ts(G_1))\,, (\beta ,y)\in \vertex(\ts(G_2))\,,\\\gamma =f(\alpha,\beta )\text{  and  } \gamma \neq \infty \text{ for some  } \alpha\in\R^{|H|}, \beta \in\R^{|H|}\,,
\end{multline*}
where~$f:\R^{|H|}\times \R^{|H|}\rightarrow\R^{|H|}\cup\{\infty\}$ is defined as
\begin{equation*}
 f(\alpha,\beta):=\begin{cases}
                   \alpha&\text{if}\quad \alpha=\beta\\
		   \infty&\text{otherwise}
                  \end{cases}\,.
\end{equation*}

If $|H|\le 3$, the projection of $\ts(G_1)$, $\ts(G_2)$ into the space indexed by the edge set~$H$ is a simplex. Indeed, in this case every hamiltonian tour in $G_1$, $G_2$ uses exactly two edges of $H$, and thus  the projection of $\ts(G_1)$, $\ts(G_2)$ into the space indexed by the edge set~$H$ is the convex hull of $0,1$-points with at most three coordinates exactly two of which should be equal to $1$.

For cutsets of at most three edges, Theorem~\ref{thm:margot_theorem} immediately leads to the formulations of the travelling salesman polytope for Halin and prismatic graphs provided in \cite{CNP}

\begin{remark}

In all examples above, the function $f:(\alpha,\beta)\mapsto \gamma$ was such that $\gamma=\infty$ whenever $\alpha$ and $\beta$ were distinct, and $\gamma=\alpha$ otherwise. However, in the dissertation~\cite{FM}, Margot studied extended formulations for strongly connected subgraphs in a series-parallel graph, where the series composition rule gives a more complicated composition function $f$.
\end{remark}

\section{A Characterization of Projected Faces Property}\label{sec:characterization}
  Theorem \ref{thm:pfp_property_main} below provides a characterization of the PF-property. The "only if" direction was proved by Margot in his dissertation \cite{FM} and is fundamental for the proof of Theorem \ref{thm:margot_theorem}. We prove the "if" direction and provide a compact proof of the other direction, the one proved by Margot.

\begin{lem}\label{lem:auxiliary_thm_pfp_property_main}
Given a polytope~$P\subseteq\R^n\times\R^d$ and its facet $F$ one of the following holds:
\begin{enumerate}[(i)]
\item $\dim (p_x(F))=\dim(p_x(P))$
\item $\dim (p_x(F))=\dim(p_x(P))-1$ and $p_x(F)$ is a facet of $p_x(P)$.
\end{enumerate}
\end{lem}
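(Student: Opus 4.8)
The plan is to convert the claim into a comparison of dimensions of affine hulls under the projection~$p_x$, and then, in the single case where the dimension genuinely drops, to produce an explicit valid inequality of~$p_x(P)$ whose equality set is~$p_x(F)$. Throughout, let $K:=\{\mathbf 0\}^n\times\R^d=\ker p_x$, and for a polytope~$C$ let $\operatorname{lin}(C)$ denote the direction space of~$\aff(C)$. The first ingredient is the elementary identity, valid for every polytope $C\subseteq\R^n\times\R^d$,
\[
  \dim p_x(C)=\dim C-\dim\bigl(\operatorname{lin}(C)\cap K\bigr),
\]
which follows from $\aff(p_x(C))=p_x(\aff(C))$ together with the rank--nullity theorem applied to~$p_x$ restricted to the subspace~$\operatorname{lin}(C)$.

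Applying this identity to $C=P$ and $C=F$ yields the required dichotomy. Since~$F$ is a facet, $\operatorname{lin}(F)$ is a hyperplane of~$\operatorname{lin}(P)$, so $\operatorname{lin}(F)\cap K$ has dimension either $\dim(\operatorname{lin}(P)\cap K)$ or $\dim(\operatorname{lin}(P)\cap K)-1$. In the former case the identity gives $\dim p_x(F)=\dim p_x(P)-1$; in the latter it gives $\dim p_x(F)=\dim p_x(P)$, which is conclusion~(i) and we are done. So it remains to treat the case $\operatorname{lin}(F)\cap K=\operatorname{lin}(P)\cap K=:V$, and to show that there $p_x(F)$ is a facet of~$p_x(P)$.

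For this I would write $F=\setDef{z\in P}{\scalProd{c}{z}=\delta}$ with $\scalProd{c}{z}\le\delta$ valid for~$P$ and $c\neq\mathbf 0$, and split $c=(c_x,c_y)\in\R^n\times\R^d$. Since $\aff(F)$ lies in the hyperplane $\setDef{z}{\scalProd{c}{z}=\delta}$, we have $\operatorname{lin}(F)\subseteq c^{\perp}$, hence $c\perp V$ and therefore $(\mathbf 0,c_y)\perp V$. Now $V=\operatorname{lin}(P)\cap K=\bigl(\operatorname{lin}(P)^{\perp}+(\R^n\times\{\mathbf 0\})\bigr)^{\perp}$, so $(\mathbf 0,c_y)\in\operatorname{lin}(P)^{\perp}+(\R^n\times\{\mathbf 0\})$; writing $(\mathbf 0,c_y)=q+(r,\mathbf 0)$ with $q\in\operatorname{lin}(P)^{\perp}$ and $r\in\R^n$, and setting $\tilde c:=c-q=(c_x+r,\mathbf 0)$, we get $\tilde c-c\in\operatorname{lin}(P)^{\perp}$. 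Consequently there is a scalar~$\tilde\delta$ for which $\scalProd{\tilde c}{z}\le\tilde\delta$ is valid for~$P$ with equality exactly on~$F$. Since $\scalProd{\tilde c}{(x,y)}=\scalProd{c_x+r}{x}$ does not depend on~$y$, the inequality $\scalProd{c_x+r}{x}\le\tilde\delta$ is valid for~$p_x(P)$ and $p_x(F)=\setDef{x\in p_x(P)}{\scalProd{c_x+r}{x}=\tilde\delta}$. Thus $p_x(F)$ is a face of~$p_x(P)$, and as it has dimension $\dim p_x(P)-1$ it is a facet.

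The one substantial point is this replacement step: the linear-algebra identity $\bigl(\operatorname{lin}(P)^{\perp}+(\R^n\times\{\mathbf 0\})\bigr)^{\perp}=\operatorname{lin}(P)\cap K$, which is exactly what allows one to trade the facet-defining functional~$c$ of~$P$ for one with vanishing $y$-part precisely when $\operatorname{lin}(F)\cap K=\operatorname{lin}(P)\cap K$. Everything else is routine manipulation of affine hulls, together with the standard fact that a face of a polytope whose dimension is one less than that of the polytope is a facet.
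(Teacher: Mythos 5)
Your proof is correct, but it travels a different road from the paper's. For the dichotomy, the paper picks a vertex $v\in\vertex(P)\setminus F$ and notes that $v$ together with $F$ affinely generates $\aff(P)$, hence $p_x(v)$ together with $p_x(F)$ affinely generates $\aff(p_x(P))$, so the dimension can drop by at most one; you instead obtain the same dichotomy from rank--nullity applied to $p_x$ restricted to $\operatorname{lin}(P)$ and to its hyperplane $\operatorname{lin}(F)$, which has the added benefit of identifying exactly which case occurs in terms of whether $\operatorname{lin}(F)\cap\ker p_x$ equals $\operatorname{lin}(P)\cap\ker p_x$ or is a hyperplane of it. For the face claim, the paper argues top-down: a hyperplane $H$ of $\aff(p_x(P))$ containing $p_x(F)$ pulls back to a hyperplane $p_x^{-1}(H)\cap\aff(P)$ of $\aff(P)$ containing $F$, which by uniqueness of the facet-defining hyperplane forces $F=p_x^{-1}(H)\cap P$ and hence $p_x(F)=H\cap p_x(P)$; you argue bottom-up, performing surgery on the facet-defining functional $c=(c_x,c_y)$ to remove its $y$-part modulo $\operatorname{lin}(P)^{\perp}$, which is possible precisely because $(\mathbf 0,c_y)$ annihilates $\operatorname{lin}(P)\cap\ker p_x$ in the dimension-drop case. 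The two arguments are dual views of the same geometric fact --- that the facet hyperplane of $F$ is the preimage of a hyperplane of $\aff(p_x(P))$ --- but yours produces the valid inequality for $p_x(P)$ explicitly (useful if one wants a constructive or algorithmic version), while the paper's is shorter and coordinate-free. All the steps you flag as routine (the identity $\dim p_x(C)=\dim C-\dim(\operatorname{lin}(C)\cap\ker p_x)$, the identity $\bigl(\operatorname{lin}(P)^{\perp}+(\R^n\times\{\mathbf 0\})\bigr)^{\perp}=\operatorname{lin}(P)\cap\ker p_x$, and the passage from $\tilde c-c\in\operatorname{lin}(P)^{\perp}$ to equality of the two faces) do check out.
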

\begin{proof} Let $v$ be a vertex of $P$ not lying on the facet $F$. The vertex $v$ together with $F$ affinely generate $\aff(P)$, in particular $p_x(v)$ together with $p_x(F)$ affinely generate $\aff(p_x(P))$. Thus, $\dim (p_x(F))=\dim(p_x(P))$ or $\dim (p_x(F))=\dim(p_x(P))-1$. It remains to show that $p_x(F)$ is a face of $p_x(P)$ whenever $\dim (p_x(F))=\dim(p_x(P))-1$.

Let $\dim (p_x(F))=\dim(p_x(P))-1$. Hence, there is a hyperplane $H$ in the affine space $\aff(p_x(P))$ containing $p_x(F)$. Thus, the hyperplane $p_x^{-1}(H)\cap \aff(P)$ is the unique hyperplane in the affine space $\aff(P)$ containing the facet $F$ of $P$. Thus, $F=p_x^{-1}(H)\cap P$ implying $p_x(F)=H\cap p_x(P)$. On the other hand since $p_x^{-1}(H)\cap \aff(P)$ is a facet defining hyperplane for $P$, the hyperplane $H$ is a face defining for the polytope $p_x(P)$, i.e. $p_x(F)=H\cap p_x(P)$ is a face of the polytope $p_x(P)$.
\end{proof}

\begin{thm}\label{thm:pfp_property_main}
Given a polytope~$P\subseteq\R^n\times\R^d$ and the projection in the first~$n$ coordinates~$p_x:\R^n\times\R^d\rightarrow\R^n$,  the pair~$(P,p_x)$ has the PF-property if and only if for every~$S\subseteq\vertex(p_x(P))$
 the following holds (see Figure~\ref{fig:characterization})
  \begin{equation}\label{eq:pfp_main_property}
      \conv\setDef{v\in\vertex(P)}{p_x(v)\in S}=\setDef{v\in P}{p_x(v)\in\conv(S)}\,,
  \end{equation}
  i.e. the pair~$(P,p_x)$ has the PF-property if and only if for every~$S\subseteq\vertex(p_x(P))$
    \begin{equation}
     p_x^{-1}(\conv(S))\cap P=\conv(p_x^{-1}(S)\cap\vertex(P))\,.
  \end{equation}
\end{thm}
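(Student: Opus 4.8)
The plan is to prove the two directions separately, using Lemma~\ref{lem:auxiliary_thm_pfp_property_main} as the workhorse for the harder ("if") direction. Throughout, note that the inclusion $\conv\setDef{v\in\vertex(P)}{p_x(v)\in S}\subseteq\setDef{v\in P}{p_x(v)\in\conv(S)}$ is trivial: each vertex $v$ on the left satisfies $p_x(v)\in S\subseteq\conv(S)$, and the right-hand set is convex (it is the intersection of the polytope $P$ with the preimage of a convex set). So in both directions the work is in the reverse inclusion, respectively in relating it to the PF-property.

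First I would prove the "only if" direction. Assume $(P,p_x)$ has the PF-property and fix $S\subseteq\vertex(p_x(P))$. Let $w\in P$ with $p_x(w)\in\conv(S)$; I want to write $w$ as a convex combination of vertices $v$ of $P$ with $p_x(v)\in S$. Write $w=\sum_i\lambda_i v_i$ as a convex combination of vertices $v_i$ of $P$ with all $\lambda_i>0$; let $F$ be the minimal face of $P$ containing $w$, so $F=\conv\{v_i\}$ and $F$ is the minimal face containing each $v_i$. By the PF-property $p_x(F)$ is a face of $p_x(P)$, and $p_x(w)=\sum_i\lambda_i p_x(v_i)\in p_x(F)$, so $p_x(w)$ lies in the relative interior of... — more carefully: since $p_x(w)\in\conv(S)$ and $\conv(S)\subseteq p_x(P)$, and $p_x(w)$ is a convex combination with positive coefficients of the points $p_x(v_i)$, any face of $p_x(P)$ containing $p_x(w)$ contains all the $p_x(v_i)$. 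Now take the minimal face $G$ of $p_x(P)$ containing $p_x(w)$; then $G$ contains every $p_x(v_i)$. The face $G$ is also contained in $\conv(S)$? No — rather, $p_x(w)\in\conv(S)$ means every vertex of the minimal face of $p_x(P)$ through $p_x(w)$ is a vertex of $\conv(S)$, hence lies in $S$ (here one uses that the vertices of $\conv(S)$ are a subset of $S$). Since each $p_x(v_i)\in G$ and $G$'s vertices lie in $S$, and each $p_x(v_i)$ is itself a vertex of $p_x(P)$ (because $v_i$ is a vertex of $P$ and... actually this needs the PF-property applied to the $0$-dimensional face $\{v_i\}$: $p_x(v_i)$ is a face of $p_x(P)$, so a vertex), we get $p_x(v_i)\in S$ for all $i$. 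Hence $w\in\conv\setDef{v\in\vertex(P)}{p_x(v)\in S}$, as desired.

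Next, the "if" direction: assume \eqref{eq:pfp_main_property} holds for every $S\subseteq\vertex(p_x(P))$, and let $F$ be a face of $P$; I must show $p_x(F)$ is a face of $p_x(P)$. By induction on $\dim(P)-\dim(F)$, it suffices to treat $F$ a facet of $P$ and then iterate (a facet of a facet... — one must be a little careful, but the standard reduction is: any proper face is an intersection of facets, and one shows the projection of an intersection of facets is the corresponding intersection of projected facets, using \eqref{eq:pfp_main_property}; alternatively reduce via the chain of faces). Concretely: apply Lemma~\ref{lem:auxiliary_thm_pfp_property_main} to the facet $F$. In case (ii) we are done immediately, so assume case (i): $\dim(p_x(F))=\dim(p_x(P))$. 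Let $S:=\setDef{p_x(v)}{v\in\vertex(F)}\subseteq\vertex(p_x(P))$ (each $p_x(v)$ being a vertex of $p_x(P)$ — this itself will follow once we know vertices project to faces, which is the base of the induction and is exactly case where $F$ is a vertex; handle that first, noting a vertex is an intersection of facets). Then $\conv(S)=p_x(F)$, and by the trivial inclusion plus \eqref{eq:pfp_main_property}, $p_x^{-1}(p_x(F))\cap P=\conv(p_x^{-1}(S)\cap\vertex(P))$. I claim this forces $p_x(F)=p_x(P)$, hence $p_x(F)$ is the (improper) face $p_x(P)$: indeed, in case (i) $\aff(p_x(F))=\aff(p_x(P))$, and if $p_x(F)\ne p_x(P)$ there is a vertex $u$ of $p_x(P)$ with $u\notin p_x(F)$; pulling back, the vertex of $P$ over $u$ does not lie in $p_x^{-1}(p_x(F))\cap P$, but... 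I then derive a contradiction with the convexity/dimension, using that $p_x^{-1}(p_x(F))\cap P$ is a $\dim(P)$-dimensional (full) subset — actually $p_x^{-1}(p_x(F))\cap P$ has the same dimension as $P$ since $p_x(F)$ is full-dimensional in $p_x(P)$ and $F$ is a facet (dimension count: $\dim F = \dim P-1$, the fiber adds back nothing in the $x$-directions but $p_x^{-1}(p_x(F))$ is bigger than $F$...). The cleanest route: show directly that if case (i) holds then $F=P$, contradicting that $F$ is a facet — because a full-dimensional projection of a facet, together with \eqref{eq:pfp_main_property} applied to $S=\vertex(p_x(P))$ giving $P=p_x^{-1}(p_x(P))\cap P=\conv(\vertex(P))$, pins down that every vertex of $P$ already lies over $S=\vertex(p_x(F))$...

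The step I expect to be the main obstacle is precisely this last one: ruling out case (i) of Lemma~\ref{lem:auxiliary_thm_pfp_property_main} (equivalently, showing that under \eqref{eq:pfp_main_property} a facet of $P$ cannot project onto all of $p_x(P)$). The right idea is to use property \eqref{eq:pfp_main_property} with $S=\vertex(p_x(F))$: it says $p_x^{-1}(p_x(F))\cap P=\conv(p_x^{-1}(\vertex(p_x(F)))\cap\vertex(P))$; if $p_x(F)=p_x(P)$ then the left side is all of $P$, so $P=\conv\setDef{v\in\vertex(P)}{p_x(v)\in\vertex(p_x(F))}$, meaning every vertex of $P$ projects into $\vertex(p_x(P))$ and moreover lies in the convex hull forcing — combined with the fact that $F$, a facet, is cut out by a valid inequality $\langle c,(x,y)\rangle\le\delta$ strict on the vertex $v_0\notin F$ — one produces a vertex $v$ of $P$ with $p_x(v)=p_x(v_0)$ yet $v\in F$ and $v_0\notin F$, then uses that $v_0$ is a convex combination of vertices of $P$ all on $F$ (by the displayed identity with $S$ chosen as the projections of $F$'s vertices, since $p_x(v_0)\in p_x(F)$), contradicting $\langle c,v_0\rangle>\delta$ while $\langle c,\cdot\rangle\le\delta$ on all of $F$. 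Writing this contradiction carefully — matching the two descriptions of $v_0$ as a point of $P$ over $p_x(F)$ — is the technical heart of the argument.
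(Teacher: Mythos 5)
Your ``only if'' argument breaks at the step where you claim that every vertex of the minimal face $G$ of $p_x(P)$ containing $p_x(w)$ lies in $S$: this is false. Take $P=p_x(P)=[0,1]^2$ (so the PF-property holds trivially), $S=\{(0,0),(1,1)\}$ and $w=(1/2,1/2)\in\conv(S)$; the minimal face of $p_x(P)$ containing $w$ is the whole square, whose vertices are not all in $S$. The same example kills the overall strategy, not just that one step: with the representation $w=\tfrac12(1,0)+\tfrac12(0,1)$, the vertices of your \emph{arbitrary} convex representation do not project into $S$, even though $w$ does lie in $\conv\setDef{v\in\vertex(P)}{p_x(v)\in S}$ via a different representation. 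So you cannot work with an arbitrary decomposition of $w$; you must exhibit a good one. The paper sidesteps this by taking $Q:=P\cap p_x^{-1}(\conv(S))$ and showing that each \emph{vertex} of $Q$ --- necessarily of the form $F\cap p_x^{-1}(F')$ for a face $F$ of $P$ and a face $F'$ of $\conv(S)$ --- is a vertex of $P$ projecting into $S$; the PF-property enters by making $p_x(F)\cap F'$ a face (hence a vertex) of $\conv(S)$.

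The ``if'' direction is also not yet a proof. Your skeleton (reduce to facets, apply Lemma~\ref{lem:auxiliary_thm_pfp_property_main}, and in case (i) show $p_x(F)=p_x(P)$) is viable, but you oscillate between that correct target and two false ones: you cannot ``rule out case (i)'' nor show ``$F=P$'', since a triangular facet of a prism projects onto the entire base while the PF-property holds. The step you rightly flag as the technical heart --- that under \eqref{eq:pfp_main_property} a facet $F$ with $\dim(p_x(F))=\dim(p_x(P))$ must satisfy $p_x(F)=p_x(P)$ --- is exactly where the paper does real work, and your sketched contradiction does not deliver it: applying \eqref{eq:pfp_main_property} with $S=\vertex(p_x(F))$ only says $v_0$ is a convex combination of vertices of $P$ projecting into $\vertex(p_x(F))$, and such vertices need not lie on $F$ (again the prism), so no conflict with the facet inequality arises. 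The paper instead takes an inclusion-wise \emph{maximal} bad face $F$, deduces $\dim(p_x(F))=\dim(p_x(P))$ by applying Lemma~\ref{lem:auxiliary_thm_pfp_property_main} to the faces covering $F$, chooses an affine basis $W\subseteq\vertex(p_x(F))$ of $\aff(p_x(P))$ and a vertex $w$ of $p_x(P)$ outside $p_x(F)$, and uses Radon's theorem on $W\cup\{w\}$ to manufacture a set $S=W_2$ violating \eqref{eq:pfp_main_property}. Some such Radon or separation argument is the missing ingredient; in addition, your reduction from general faces to facets needs the (unproved) fact that \eqref{eq:pfp_main_property} restricts to faces of $P$.
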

\begin{figure}[H]
  \begin{center}
    \begin{tikzpicture}[scale=1.4]
      \foreach \x/\y in {0/0,70/1,100/2,180/3,220/4,300/5} {\coordinate (R-\y) at ({cos(\x)}, {sin(\x)*.3+.5*cos(\x)+.7});};
      \foreach \y  in {0,...,5} { \pgfmathparse{mod(\y+1,6)} \draw (R-\y)--(R-\pgfmathresult);}
      \foreach \x/\y in {0/0,70/1,100/2,180/3,220/4,300/5} {\coordinate (L-\y) at ({cos(\x)}, {sin(\x)*.3-2});};
      \foreach \y  in {0,...,5} { \pgfmathparse{mod(\y+1,6)} \draw (L-\y)--(L-\pgfmathresult); \draw (L-\y)--(R-\y);}
      \draw[dashed, white] (L-1)--(R-1);
      \draw[dashed, white] (L-2)--(R-2);
      \draw[dashed, white] (L-0)--(L-1)--(L-2)--(L-3);
      \draw[fill=blue!40, fill opacity=0.5] (L-2)--(R-2)--(R-5)--(L-5)--(L-2);
      \foreach \y  in {2,5}{
     		\node[circle,shading=ball, scale=0.5] at (L-\y) {};
		\node[circle,shading=ball, scale=0.5] at (R-\y) {};
      }
    \end{tikzpicture}
    \hspace{1cm}
    \begin{tikzpicture}[scale=1.4]
   	\draw [white,name path=line1] ({cos(100)}, {sin(100)*.3+1})--({cos(300)}, {sin(300)*.3+1});
	\draw [white,name path=line2] ({cos(220)}, {sin(220)*.3+1})--({cos(0)}, {sin(0)*.3+1});
	\path [name intersections={of=line1 and line2,by=K-2}];
      \foreach \x/\y in {0/0,70/1,100/2,180/3,220/4,300/5} {\coordinate (R-\y) at ({cos(\x)}, {sin(\x)*.3});};
      \foreach \x/\y in {0/0,100/2,220/4} {\coordinate (R-\y) at ({cos(\x)}, {sin(\x)*.3+1});};
      \foreach \y  in {0,...,5} { \pgfmathparse{mod(\y+1,6)} \draw (R-\y)--(R-\pgfmathresult);}
      \draw[very thick] (R-0)--(R-2)--(R-4)--(R-0);
      \foreach \x/\y in {0/0,70/1,100/2,180/3,220/4,300/5} {\coordinate (L-\y) at ({cos(\x)}, {sin(\x)*.3-2});};
      \foreach \y  in {0,...,5} { \pgfmathparse{mod(\y+1,6)} \draw (L-\y)--(L-\pgfmathresult); \draw (L-\y)--(R-\y);
      \draw[dashed, white] (L-1)--(R-1);
      \draw[dashed, white] (L-2)--(R-2);
      \draw[dashed, white] (L-0)--(L-1)--(L-2)--(L-3);
      \draw[dashed, white] (R-0)--(R-1)--(R-2)--(R-3);
      \draw[fill=blue!40, fill opacity=0.1] (R-2)--(L-2)--(L-5)--(R-5)--(K-2)--(R-2);
       \foreach \y  in {2,5}{
     		\node[circle,shading=ball, scale=0.5] at (L-\y) {};
		\node[circle,shading=ball, scale=0.5] at (R-\y) {};
      }
}
    \end{tikzpicture}
  \end{center}
   \caption{Here, the vertices $S$ are the marked vertices in the lower facets. On the left, the convex hull of all vertices of the polytope $P$ with projection in $S$ is the set of points in $P$ which project into $\conv(S)$. On the right, the convex hull of all vertices of the polytope $P$ with projection in $S$ is strictly contained in the set of all points in $P$ which project into $\conv(S)$. }
  \label{fig:characterization}
\end{figure}
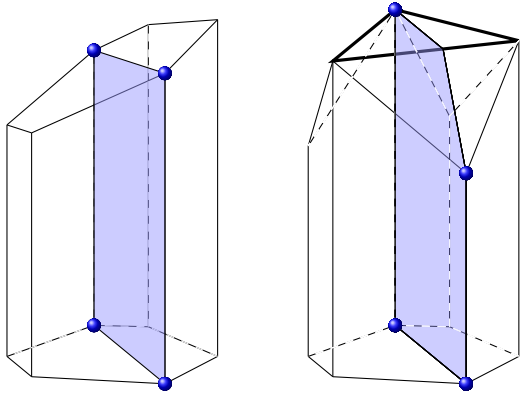

\begin{proof}
Part "only if"~\cite{FM}: Let~$Q\subseteq\R^n\times\R^d$ be the polytope
\begin{equation*}
    \setDef{v\in P}{p_x(v)\in\conv(S)}\,.
\end{equation*}
Then  the inclusion~$\conv\setDef{v\in\vertex(P)}{p_x(v)\in S}\subseteq Q$ is immediate. Thus, it is enough to show that every vertex of~$Q$ is a vertex of the polytope~$P$ whose projection is in~$S$.

The polytope $Q$ is the intersection of the polytope $P$ and the polyhedron $\setDef{v}{p_x(v)\in\conv(S)}$. Thus, every vertex $v\in\vertex(Q)$ is induced by a face~$F\subseteq\R^{n}\times\R^{d}$ of the polytope~$P$ and by a face~$F'\subseteq\R^n$ of the polytope~$\conv(S)$ in the following manner
\begin{equation*}
    \{v\}=F\cap\setDef{(x,y)\in\R^{n}\times\R^{d}}{x\in F'}\,,
\end{equation*}
which implies
\begin{equation*}
    \{p_x(v)\}=p_x(F\cap \setDef{(x,y)\in\R^{n}\times\R^{d}}{x\in F'})=p_x(F)\cap F'\,.
\end{equation*}
 Since the pair~$(P,p_x)$ has the PF-property, $p_x(F)$ is a face of the polytope~$p_x(P)$, and thus~$\{p_x(v)\}=p_x(F)\cap F'$ is a face of the polytope $\conv(S)$, i.e.~$p_x(v)$ is a point in~$S$ and thus a vertex of $p_x(P)$. Finally, the set~$\{v\}=F\cap p_x^{-1}(p_x(v))$ is the intersection of the face $F$ of $P$ and the face $P\cap p_x^{-1}(p_x(v))$ of $P$, i.e. $\{v\}$ is a face of the polytope~$P$ and hence~$v$ is a vertex of $P$.

\bigskip

Part "if":  Assume for the sake of contradiction that $(P,p_x)$ does not have the PF-property and let $F$ be an inclusion wise maximal face of~$P$ such that~$p_x(F)$ is not a face of the polytope~$p_x(P)$.

Then all inclusion wise minimal faces $F'$ of $P$, which properly contain $F$, satisfy the equation $\dim(p_x(F))=\dim(p_x(F'))$. Indeed, due to the inclusion wise maximality of $F$ the projection $p_x(F')$ is a face of $p_x(P)$. The projection $p_x(F)$ is not a face of $p_x(F')$ since otherwise $p_x(F)$ is also a face of $p_x(P)$.  By Lemma~\ref{lem:auxiliary_thm_pfp_property_main}, applied to the polytope $F'$ and its facet $F$, we obtain that $\dim(p_x(F))=\dim(p_x(F'))$. Moreover, since the faces $F'$ all together affinely span $\aff(P)$  and for all $F'$ holds the equation $\dim(p_x(F))=\dim(p_x(F'))$, we obtain $$p_x(\aff(P))= p_x(\aff(\cup_{F'}F'))=\aff(\cup_{F'}p_x(F'))=\aff(p_x(F))\,.$$

  Hence, $\dim(p_x(F))=\dim(p_x(P))$. Therefore $\vertex(p_x(F))$ contains a set $W$, such that every point in~$p_x(P)$ has a unique representation as an affine combination of points in~$W$.
Let $w$ be a vertex of $p_x(P)$, which is not in $p_x(F)$. By Radon's Theorem~\cite{R}, the set of points~$W\cup\{w\}$ can be partitioned in two sets~$W_1\subseteq W$ and $W_2$ such that there is a point~$u$ which lies in~$\conv(W_1) \cap\conv(W_2)$. But this contradicts the statement~\eqref{eq:pfp_main_property} for the set~$S=W_2$ since
  \begin{multline*}
      u \in p_x(\setDef{v\in P}{p_x(v)\in\conv(S)}\cap F)\qquad\text{and}\\
      u \not\in p_x(\conv\setDef{v\in\vertex(P)}{p_x(v)\in S}\cap F)=\\p_x(\conv\setDef{v\in\vertex(F)}{p_x(v)\in S})\,.
  \end{multline*}
 To finish the proof it is enough to notice that $W_2$ is a subset of~$\vertex(p_x(P))$. Indeed every vertex of the polytope~$P$ is projected to a vertex of the polytope~$p_x(P)$, otherwise we would be able to choose~$S$ equal to~$\vertex(p_x(P))$, which leads to
\begin{equation*}
     \conv\setDef{v\in\vertex(P)}{p_x(v)\in S}\subsetneq P=\setDef{v\in P}{p_x(v)\in\conv(S)}\,.
\end{equation*}
\end{proof}

\section{Linear Descriptions based on the Projected Faces Property}\label{sec:framework}

 Here we prove Theorem~\ref{thm:margot_theorem}, the main result in Margot's dissertation \cite{FM}.

\begin{lem}~\cite{FM}\label{lem:pfp_property_prod}
   The PF-property is compatible with the cartesian product operation, i.e. if a pair $(P_1, p_{\alpha})$ has the PF-property and a pair $(P_2,p_{\beta})$ has the PF-property then the pair $(P_1\times P_2, p_{\alpha,\beta})$ has the PF-property.
\end{lem}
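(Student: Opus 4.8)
The plan is to prove Lemma~\ref{lem:pfp_property_prod} directly from the structure of faces of a cartesian product. The key fact I would invoke is that every nonempty face $G$ of $P_1 \times P_2$ has the form $G = F_1 \times F_2$, where $F_1$ is a face of $P_1$ and $F_2$ is a face of $P_2$. This is standard: a face is the set of maximizers of some linear functional $(c_1, c_2)$ over $P_1 \times P_2$, and since the objective separates as $\langle c_1, \cdot\rangle + \langle c_2, \cdot\rangle$ and the feasible region is a product, the maximizers are exactly the product of the maximizers of $\langle c_1,\cdot\rangle$ over $P_1$ with the maximizers of $\langle c_2,\cdot\rangle$ over $P_2$.

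Granting this, the proof is almost immediate. First I would note that $p_{\alpha,\beta}(F_1 \times F_2) = p_\alpha(F_1) \times p_\beta(F_2)$, simply because the projection map in the $(\alpha,\beta)$-coordinates acts coordinatewise on the two factors. Next, since $(P_1, p_\alpha)$ has the PF-property, $p_\alpha(F_1)$ is a face of $p_\alpha(P_1)$; similarly $p_\beta(F_2)$ is a face of $p_\beta(P_2)$. Then I would again use the product-of-faces fact in the reverse direction: the product of a face of $p_\alpha(P_1)$ with a face of $p_\beta(P_2)$ is a face of $p_\alpha(P_1) \times p_\beta(P_2)$. Finally, $p_\alpha(P_1) \times p_\beta(P_2) = p_{\alpha,\beta}(P_1 \times P_2)$, so $p_{\alpha,\beta}(F_1 \times F_2)$ is a face of $p_{\alpha,\beta}(P_1 \times P_2)$, which is exactly what the PF-property for $(P_1 \times P_2, p_{\alpha,\beta})$ demands. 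The empty face is handled trivially since it projects to the empty face.

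The only mild subtlety worth spelling out is the claim that faces of a product are products of faces, and its converse; both are elementary but I would state them explicitly (or cite~\cite{SC}) rather than leave them implicit, since the whole lemma rests on them. An alternative, if one prefers to avoid even that, is to argue with supporting hyperplanes directly: a valid inequality $\langle c_1, \alpha\rangle + \langle c_2, \beta\rangle \le \delta_1 + \delta_2$ that is tight on $p_\alpha(F_1) \times p_\beta(F_2)$ can be built from valid inequalities $\langle c_1,\alpha\rangle \le \delta_1$ for $p_\alpha(P_1)$ tight on $p_\alpha(F_1)$ and $\langle c_2,\beta\rangle\le\delta_2$ for $p_\beta(P_2)$ tight on $p_\beta(F_2)$, and one checks the tight set is exactly the product. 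I do not anticipate a genuine obstacle here; the lemma is essentially bookkeeping about how projection and the face lattice interact with products, and the main point is to organize the three observations ($p_{\alpha,\beta}$ distributes over $\times$; PF-property of each factor; product of faces is a face) in the right order.
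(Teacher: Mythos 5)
Your proposal is correct and follows essentially the same route as the paper's proof: decompose each face of $P_1\times P_2$ as $F_1\times F_2$, observe that $p_{\alpha,\beta}$ distributes over the product, apply the PF-property of each factor, and use that a product of faces is a face of the product. You merely spell out the standard product-of-faces facts in more detail than the paper does.
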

\begin{proof}
Indeed, every face $F$ of $P_1\times P_2$ may be written as $F_1\times F_2$, where $F_1$ is a face of $P_1$ and $F_2$ is a face of $P_2$. Due to the PF-property of $(P_1, p_{\alpha})$ and $(P_2, p_{\beta})$, the projection  $p_{\alpha,\beta}(F_1\times F_2)=p_\alpha(F_1)\times p_\beta(F_2)$ is the cartesion product of a face of $p_\alpha(P_1)$ and a face of $p_\beta(P_2)$, i.e. $p_{\alpha,\beta}(F)=p_{\alpha,\beta}(F_1\times F_2)$ is a face of $p_\alpha(P_1)\times p_\beta(P_2)=p_{\alpha,\beta}(P_1\times P_2)$.
\end{proof}

\begin{prop}~\cite{FM}\label{prop:pfp_property_intersection_preserves_pfp}
  If a pair~$(P,p_x)$ has the PF-property then for every $S\subseteq\vertex(p_x(P))$ the pair
  \begin{equation*}
    (\,P\cap p_x^{-1}(\conv(S)),p_x)
   \end{equation*}
  has the PF-property.
\end{prop}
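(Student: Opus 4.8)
The plan is to reduce the claim entirely to the characterization in Theorem~\ref{thm:pfp_property_main}, so that no new geometric input is needed. Write $Q := P\cap p_x^{-1}(\conv(S))$. Since $S\subseteq\vertex(p_x(P))$, Theorem~\ref{thm:pfp_property_main} applied to $(P,p_x)$ and the set $S$ gives
$$Q=p_x^{-1}(\conv(S))\cap P=\conv\setDef{v\in\vertex(P)}{p_x(v)\in S}\,.$$
From this I would first identify the vertex set of $Q$: the inclusion $\vertex(Q)\subseteq\setDef{v\in\vertex(P)}{p_x(v)\in S}$ holds because vertices of the convex hull of a finite set belong to that set, while the reverse inclusion holds because a vertex of $P$ is an extreme point of $P$ and hence of the smaller polytope $Q\subseteq P$. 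Thus $\vertex(Q)=\setDef{v\in\vertex(P)}{p_x(v)\in S}$.

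Next I would record the structure of the image polytope $p_x(Q)$. Using the identity $p_x(P)=\conv(p_x(\vertex(P)))$ from the introduction, we get $S\subseteq\vertex(p_x(P))\subseteq p_x(\vertex(P))$, so every point of $S$ is the projection of some vertex of $P$; combined with the description of $\vertex(Q)$ above this yields $p_x(Q)=\conv(S)$. Moreover, since every point of $S$ is a genuine vertex of $p_x(P)$, it is not a convex combination of the other points of $S$, so $\vertex(p_x(Q))=\vertex(\conv(S))=S$.

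Finally I would verify the PF-property of $(Q,p_x)$ via Theorem~\ref{thm:pfp_property_main}. Fix an arbitrary $T\subseteq\vertex(p_x(Q))=S$. Because $T\subseteq S$ we have, on one hand,
$$\setDef{v\in\vertex(Q)}{p_x(v)\in T}=\setDef{v\in\vertex(P)}{p_x(v)\in T}\,,$$
and, on the other hand, using $\conv(T)\subseteq\conv(S)$,
$$\setDef{v\in Q}{p_x(v)\in\conv(T)}=\setDef{v\in P}{p_x(v)\in\conv(T)}\,.$$
These two sets are equal by \eqref{eq:pfp_main_property} applied to $(P,p_x)$ and the set $T\subseteq\vertex(p_x(P))$, which is exactly \eqref{eq:pfp_main_property} for $(Q,p_x)$ and $T$. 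As $T$ was arbitrary, Theorem~\ref{thm:pfp_property_main} shows that $(Q,p_x)$ has the PF-property. The only point requiring care is the bookkeeping in identifying $\vertex(Q)$ and $\vertex(p_x(Q))$; there is no substantive obstacle beyond invoking the characterization theorem.
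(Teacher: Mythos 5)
Your proof is correct, but it takes a genuinely different route from the paper's. The paper argues directly on faces: every face of $Q=P\cap p_x^{-1}(\conv(S))$ has the form $F\cap p_x^{-1}(F')$ for a face $F$ of $P$ and a face $F'$ of $\conv(S)$, so its projection is $p_x(F)\cap F'$, which by the PF-property of $(P,p_x)$ is the intersection of a face of $p_x(P)$ with a face of $\conv(S)$ and hence a face of $\conv(S)=p_x(Q)$. That argument is short, self-contained, and uses only the \emph{definition} of the PF-property (plus the standard description of faces of an intersection); in particular it does not rely on the harder ``if'' direction of Theorem~\ref{thm:pfp_property_main}, which is proved via Radon's theorem. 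You instead route everything through the characterization: the ``only if'' half identifies $Q=\conv\setDef{v\in\vertex(P)}{p_x(v)\in S}$, whence $\vertex(Q)=\setDef{v\in\vertex(P)}{p_x(v)\in S}$ and $\vertex(p_x(Q))=S$ (your bookkeeping here is right), and then condition~\eqref{eq:pfp_main_property} for $(Q,p_x)$ and any $T\subseteq S$ is literally the same condition for $(P,p_x)$ and $T$, so the ``if'' half closes the argument. What this buys is a purely formal proof with no face geometry at all, and it makes transparent that the PF-property of the intersection is nothing but the restriction of the family of identities~\eqref{eq:pfp_main_property} to subsets of $S$; the cost is that you invoke both directions of Theorem~\ref{thm:pfp_property_main}, including the deeper one. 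One cosmetic point: where you say ``these two sets are equal by~\eqref{eq:pfp_main_property}'', the finite set and the polytope are of course not equal as written --- you mean that the convex hull of the first displayed set equals the second, which combined with your two displayed identities yields~\eqref{eq:pfp_main_property} for $(Q,p_x)$; the intended chain is clear and valid.
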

\begin{proof}
  Indeed, every face of the polytope~$P\cap p_x^{-1}(\conv(S))$ is induced by a face~$F\subseteq\R^{n}\times\R^{d}$ of the polytope~$P$ and by a face~$F'\subseteq\R^n$ of the polytope~$\conv(S)$ in the following manner
  \begin{equation*}
      F\cap\setDef{(x,y)\in\R^{n}\times\R^{d}}{x\in F'}\,,
  \end{equation*}
  and thus its projection equals~$p_x(F)\cap F'$. Due to the PF-property $p_x(F)$ is a face of the polytope~$p_x(P)$, and thus~$p_x(F)\cap F'$ is a face of~$\conv(S)$, i.e. is a face of the polytope~$p_x(P\cap p_x^{-1}(\conv(S)))$.
\end{proof}

\bigskip

Now let us prove Theorem~\ref{thm:margot_theorem}.
\begin{proof}
 From Lemma~\ref{lem:pfp_property_prod} applied to~$(P_1, p_{\alpha})$, $(P_2, p_{\beta})$ and $(p_{\gamma}(P_3),p_{\gamma})$ it follows that the polytope
\begin{multline*}
 p_{\gamma}(P_3)\times P_1\times P_2=\{(\gamma,\alpha,x,\beta, y)\in \R^n\times\R^{n_1}\times\R^{d_1}\times \R^{n_2}\times\R^{d_2}:\\
  \;A_1(\alpha,x)\le b_1, A_2(\beta,y)\le b_2, \gamma \in p_{\gamma}(P_3)\}
\end{multline*}
together with the projection $p_{\gamma,\alpha,\beta}$ has the PF-property.

Let $S$ be defined as the following set of points
\begin{multline*}
    (\gamma,\alpha,\beta)\in \R^n\times\R^{n_1}\times\R^{n_2} \text{ such that } \\\alpha \in\vertex(p_{\alpha}(P_1))\, , \beta \in\vertex(p_{\beta}(P_2))\,,\gamma=f(\alpha,\beta)\text{  and  } \gamma \neq \infty\,.
\end{multline*}
Since $S=\vertex(P_3)$ and every vertex of the polytope~$P_3$ is projected to a vertex of the polytope~$p_{\gamma}(P_3)$ via the map~$p_{\gamma}$, we obtain  $S\subseteq \vertex(p_{\gamma}(P_3)) \times \vertex(p_{\alpha}(P_1))\times\vertex(p_{\beta}(P_2))$. Hence $S\subseteq\vertex(p_{\gamma,\alpha,\beta}(p_{\gamma}(P_3)\times P_1\times P_2))$.

From Theorem~\ref{thm:pfp_property_main} applied to the pair $(p_{\gamma}(P_3)\times P_1\times P_2, p_{\gamma,\alpha,\beta})$ and the vertex set~$S$ it follows that the polytope
\begin{multline*}
 \{(\gamma,\alpha,x,\beta, y)\in \R^n\times \R^{n_1}\times\R^{d_1}\times \R^{n_2}\times\R^{d_2}:\\
  \;A_1(\alpha,x)\le b_1, A_2(\beta,y)\le b_2, \gamma \in p_{\gamma}(P_3), A_3(\gamma,\alpha,\beta)\le b_3\}
\end{multline*}
together with the projection map~$p_{\gamma,x,y}$ provides an extended formulation of the polytope~$P$. To finish the proof note that the condition~$\gamma\in p_{\gamma}(P_3)$ is implied by the system~$A_3(\gamma,\alpha,\beta)\le b_3$, and thus the obtained extension of the polytope~$P$ is formed by the polytope~$Q$ together with the projection map~$p_{\gamma,x,y}$, which shows~\eqref{enum:projection}. On the other side from Proposition~\ref{prop:pfp_property_intersection_preserves_pfp} applied to $(p_{\gamma}(P_3)\times P_1\times P_2,p_{\gamma,\alpha,\beta})$ and the above set~$S$ we may conclude that the pair~$(Q,p_{\gamma,\alpha,\beta})$ has the PF-property showing~\eqref{enum:pfp}.
\end{proof}

In order to use this idea iteratively, we would need to show that every time Theorem~\ref{thm:margot_theorem} is applied the pairs~$(P_1,p_{\alpha})$, $(P_2,p_{\beta})$ have the PF-property. To guarantee that these pairs have the PF-property, we may consider the simplest case, i.e. the case when the polytopes~$p_{\alpha}(P_1)$, $p_{\beta}(P_2)$ are simplices and every vertex of~$P_1$ and $P_2$ is projected to a vertex of~$p_{\alpha}(P_1)$ and $p_{\beta}(P_2)$, respectively. Actually, in the dissertation Margot restricted his attention to these cases.

\begin{remark}
One may conjecture that a pair~$(P,p_x)$ has the PF-property if and only if every vertex of the polytope~$P$  is projected to a vertex of the polytope~$p_x(P)$ and every facet of the polytope~$P$ is projected to a face of the polytope~$p_x(P)$, i.e. it is enough that every facet and every vertex of the polytope~$P$ projects to a face of~$p_x(P)$.

    It is easy to see that this conjecture is true for all polytopes~$P$ of dimension at most three. However, this conjecture is not true in general: given a number~$k$ and a $(k+1)$-neighborly polytope~$P'\subseteq\R^n$ (which is not a simplex) let the polytope~$P$ be a simplex with $(k+1) |\vertex(P')|$ vertices such that the polytope~$P'$ is the projection of~$P$ in the last $n$~coordinates and for each vertex of~$P'$ there are exactly $k+1$~vertices of~$P$ projected to it. Then the pair~$(P,p_x)$ does not possess the PF-property, but every face of dimension or of co-dimension at most~$k$ is projected to a face of the polytope~$p_x(P)$ (the projection of the faces of co-dimension at most~$k$ is equal $p_x(P)$ and the faces of dimension at most~$k$ contain at most~$k+1$ vertices and thus are projected to faces of the polytope $p_x(P)$).

\end{remark}

\section{Affinely Generated Polyhedral Relations}\label{sec:affine_generated_relations}

Let~$P\subseteq\R^{n}\times\R^{d}$ be a polytope and $p_x:\R^{n}\times\R^{d}\rightarrow \R^n$ be the projection map in the first $n$~coordinates. The polytope~$P$ is called a {\em polyhedral relation of type~$(n,d)$ generated by affine maps~$\rho_1$, \ldots, $\rho_t: \R^n \rightarrow \R^d$} if for every point~$x\in p_x(P)$
\begin{equation}\label{eq:affinely_generated_def}
    P\cap p_x^{-1}(x)=\conv \setDef{(x,\rho_j(x))}{1 \le j \le t}\,.
\end{equation}

For example consider
\begin{multline*}
    P^*:=\{(x,y)\in\R\times\R^2\,:\, x-y_1-y_2\le 0,\,\\ y_1+y_2+x\le 2,\,y_1-y_2-x\le 0,\, y_2-y_1-x\le 0\}\,.
\end{multline*}
It is not hard to see that~$p_x(P^*)=[0,1]$ and that the polytope~$P^*$ is a polyhedral relation of type~$(1,2)$, which is affinely generated by~$\rho_1:x\mapsto(x,0)$, $\rho_2:x\mapsto(0,x)$, $\rho_3:x\mapsto(1-x,1)$ and $\rho_4:x\mapsto(1,1-x)$.
\smallskip

Polyhedral relations were introduced in \cite{KP} as a paradigm to construct extended formulations. We illustrate it with the parity polytope from Section~\ref{sec:examples}.
To construct an extended formulation of the parity polytope~$Q_n$ let us define
\begin{equation*}
    Q^*_n:=\setDef{(x,y,z)\in \R\times\R^2\times\R^{n-2}}{(x,z)\in Q_{n-1},\, (x,y)\in P^*}\,,
\end{equation*}
then due to the fact that~$P^*$ is a polyhedral relation, which is affinely generated by the maps~$\rho_1$, $\rho_2$, $\rho_3$ and $\rho_4$, we obtain
\begin{multline*}
    Q^*_n:=\conv\{(x,y,z)\in \R\times\R^2\times\R^{n-2}\,:\,(x,z)\in \vertex(Q_{n-1}),\\ y\in\conv(\rho_1(x),\rho_2(x),\rho_3(x),\rho_4(x))\}\,.
\end{multline*}
The last equation leads to the fact that~$p_{y,z}(Q^*_n)$ equals~$Q_n$. Applying this iteratively, we obtain the extended formulation from  Section~\ref{sec:examples} for the polytope~$Q_n$ of size~$4(n-1)$, since the condition~$(x,z)\in Q_{n-1}$ for every~$n\ge 2$ in the definition of the polytope~$Q^*_n$ can be reformulated through the polytope~$Q^*_{n-1}$ and for $n=1$ the polytope~$Q_n\subseteq \R$ consists of a single point~$(0)$.

For a detailed discussion on polyhedral relations we refer the reader to the paper~\cite{KP}.

\begin{thm}\label{thm:pfp_isomorph_faces}
   For a polytope~$P\subseteq\R^n\times\R^d$ and the projection~$p_x:\R^n\times\R^d\rightarrow\R^n$ the pair~$(P,p_x)$ has the PF-property if and only if the polytope~$P$ is an affinely generated polyhedral relation of type~$(n,d)$.
\end{thm}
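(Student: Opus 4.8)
The plan is to establish both implications by closely analyzing the fibers $P\cap p_x^{-1}(x)$ over points $x\in p_x(P)$. For the ``if'' direction, suppose $P$ is an affinely generated polyhedral relation of type $(n,d)$, generated by affine maps $\rho_1,\dots,\rho_t\colon\R^n\to\R^d$. I would verify the characterization from Theorem~\ref{thm:pfp_property_main}: for every $S\subseteq\vertex(p_x(P))$, show $p_x^{-1}(\conv(S))\cap P=\conv(p_x^{-1}(S)\cap\vertex(P))$. The inclusion $\supseteq$ is automatic. For $\subseteq$, take $(x,y)\in P$ with $x\in\conv(S)$; write $x=\sum_i\lambda_i s_i$ with $s_i\in S$, and write $y$ as a convex combination of the $\rho_j(x)$ using \eqref{eq:affinely_generated_def}. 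Since each $\rho_j$ is affine, $\rho_j(x)=\sum_i\lambda_i\rho_j(s_i)$, so $(x,y)$ becomes a convex combination of points $(s_i,\rho_j(s_i))$, each of which lies in $P$ (again by \eqref{eq:affinely_generated_def} applied at $s_i$) and projects into $S$. A point of $P$ lying over a vertex $s_i$ of $p_x(P)$ in the face $P\cap p_x^{-1}(s_i)$ need not itself be a vertex of $P$, but it is a convex combination of the $(s_i,\rho_j(s_i))$, and these \emph{are} vertices of $P$ provided one chooses the $\rho_j$ to be an irredundant generating set; alternatively, one argues directly that each $(s_i,\rho_j(s_i))$, being an extreme point of the fiber over a vertex $s_i$ of $p_x(P)$, is an extreme point of $P$. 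This gives the desired membership in $\conv(p_x^{-1}(S)\cap\vertex(P))$.

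For the ``only if'' direction, suppose $(P,p_x)$ has the PF-property. I would construct the generating affine maps as follows. First, since $(P,p_x)$ has the PF-property, every vertex of $P$ projects to a vertex of $p_x(P)$ (this is used implicitly in the proof of Theorem~\ref{thm:pfp_property_main}). Enumerate the vertices $\vertex(p_x(P))=\{s_1,\dots,s_m\}$; over each $s_i$, the fiber $P\cap p_x^{-1}(s_i)$ is a face of $P$, hence a polytope whose vertices are vertices of $P$, say $(s_i,w_{i,1}),\dots,(s_i,w_{i,k_i})$. Now fix an affine basis $W=\{s_{i_1},\dots,s_{i_r}\}$ of $\aff(p_x(P))$ among the vertices. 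For each choice of a selection function $\sigma$ assigning to each basis point $s_{i_\ell}$ one of its fiber vertices $w_{i_\ell,\sigma(\ell)}$, there is a unique affine map $\rho_\sigma\colon\aff(p_x(P))\to\R^d$ with $\rho_\sigma(s_{i_\ell})=w_{i_\ell,\sigma(\ell)}$; extend it arbitrarily (affinely) to all of $\R^n$. Let $\rho_1,\dots,\rho_t$ be this finite list of maps. I then need to show \eqref{eq:affinely_generated_def} holds for every $x\in p_x(P)$.

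The main obstacle is precisely verifying $P\cap p_x^{-1}(x)=\conv\{(x,\rho_j(x)):1\le j\le t\}$ for an arbitrary (not necessarily vertex) point $x$. The inclusion $\supseteq$ requires knowing each $(x,\rho_j(x))\in P$; this follows by applying Theorem~\ref{thm:pfp_property_main} with $S$ equal to the set of basis vertices used to define $\rho_j$: the point $(x,\rho_j(x))$ is the affine (indeed convex, since $x\in\conv(S)$) combination of the fiber vertices $(s_{i_\ell},w_{i_\ell,\sigma(\ell)})$ with the same barycentric coordinates that express $x$ over $S$, and \eqref{eq:pfp_main_property} guarantees this combination lies in $P$. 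For the inclusion $\subseteq$, given $(x,y)\in P$, express $x=\sum_\ell\mu_\ell s_{i_\ell}$ in barycentric coordinates over the fixed affine basis $W$; the key claim is that $y$ lies in the convex hull of the points $\rho_\sigma(x)$ obtained by letting $\sigma$ range over all selections. This is where the PF-property does real work: one uses that $(x,y)$ lies over $\conv(W)$, applies \eqref{eq:pfp_main_property} with $S=W$ to write $(x,y)$ as a convex combination of fiber vertices $(s_{i_\ell},w_{i_\ell,j})$, regroups the terms by the projection (so the $s_{i_\ell}$-block has total weight $\mu_\ell$), and finally invokes a combinatorial/Birkhoff-type argument — or a direct ``transportation'' decomposition — to split this into a convex combination of the ``diagonal'' selections $\rho_\sigma$. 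I would handle the potential subtlety that $\mu_\ell$ can be negative (if $x\notin\conv(W)$) by choosing $W$ more carefully, or by enlarging the family of maps to include one affine map per affine basis drawn from $\vertex(p_x(P))$, so that every $x\in p_x(P)$ is a genuine convex combination over \emph{some} basis in the family; the PF-property then transfers the fiber structure along that basis. Once this decomposition lemma is in place, both inclusions follow and \eqref{eq:affinely_generated_def} is established, completing the equivalence.
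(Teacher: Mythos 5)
Your ``if'' direction is correct and is essentially the paper's argument: both reduce to the characterization of Theorem~\ref{thm:pfp_property_main} and use affinity of the $\rho_j$ to convert $x=\sum_i\lambda_i s_i$ into $(x,\rho_j(x))=\sum_i\lambda_i(s_i,\rho_j(s_i))$, together with the observation that the fiber over a vertex $s_i$ of $p_x(P)$ is a face of $P$, so its points are convex combinations of vertices of $P$ projecting into $S$.

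The ``only if'' direction, however, has a genuine gap: your construction of the generating maps is wrong, not merely incomplete. An affine map $\rho_\sigma$ obtained by interpolating an arbitrary selection of fiber vertices over an affine basis $W\subseteq\vertex(p_x(P))$ need not satisfy $(x,\rho_\sigma(x))\in P$ for $x\in p_x(P)\setminus\conv(W)$, and then already the inclusion $\supseteq$ in \eqref{eq:affinely_generated_def} fails. Concretely, take $P=[0,1]^3\subseteq\R^2\times\R$ with $p_x$ the projection onto the first two coordinates, so $p_x(P)$ is the unit square with vertices $s_1=(0,0)$, $s_2=(1,0)$, $s_3=(0,1)$, $s_4=(1,1)$; this pair has the PF-property and is generated by the two constant maps $\rho\equiv 0$ and $\rho\equiv 1$. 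Choosing the affine basis $W=\{s_1,s_2,s_3\}$ and the selection $s_1\mapsto 0$, $s_2\mapsto 1$, $s_3\mapsto 1$ yields $\rho_\sigma(x_1,x_2)=x_1+x_2$, and $(s_4,\rho_\sigma(s_4))=((1,1),2)\notin P$. Your proposed fixes do not help: no choice of affine basis among the vertices of the square has $\conv(W)=p_x(P)$, and enlarging the family by maps built from other bases only adds more maps whose graphs leave $P$, which makes the required equality $P\cap p_x^{-1}(x)=\conv\setDef{(x,\rho_j(x))}{1\le j\le t}$ fail at more points. The idea you are missing is that the generators must be the ``section faces'' of $P$: the paper takes a point $x$ in the relative interior of $p_x(P)$, writes each vertex $v$ of the fiber as $F\cap p_x^{-1}(x)$ for a face $F$ of $P$, and uses the PF-property to conclude that $p_x(F)$ is a face of $p_x(P)$ containing a relative interior point, hence $p_x(F)=p_x(P)$ and $\dim(F)=\dim(p_x(P))$; thus $F$ is affinely isomorphic to $p_x(P)$ under $p_x$ and its inverse has the form $(\id,\rho_i)$. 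These finitely many faces supply the generating maps (their graphs are automatically contained in $P$), and the boundary fibers are then handled by a continuity argument. Without identifying the generators with such faces, your ``transportation'' decomposition has nothing valid to decompose into.
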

\begin{proof}
Part "only if":  Consider a point~$x$ in the relative interior of~$p_x(P)$. For every vertex~$v$ of the fiber~$P\cap p_x^{-1}(x)$ there is a face~$F$ of the polytope~$P$ such that~$v=F\cap p_x^{-1}(x)$. Let us prove $\dim(F)=\dim(p_x(P))$. From the equations~$\dim(F)=\dim(F\cap p_x^{-1}(x))+\dim(p_x(F))$ and~$\dim(F\cap p_x^{-1}(x))=0$ follows that~$\dim(F)$ is at most~$\dim(p_x(P))$. Moreover, the projection $p_x(F)$ of the face~$F$ equals the polytope~$p_x(P)$, since the pair~$(P,p_x)$ has the PF-property and the point~$x$ lies in the relative interior of the polytope~$p_x(P)$. Let us consider $\dim(p_x(P))$-dimensional faces~$F_i\subseteq\R^n\times\R^d$, $1\le i \le t$ of the polytope~$P$, which are projected to~$p_x(P)$. The projection~$p_x$ defines an affine isomorphism between a face~$F_i$, $1\le i \le t$   and the polytope~$p_x(P)$. Let us represent the inverse map to this isomorphism in the following form~$(\id,\rho_i): \R^n \rightarrow \R^n \times \R^d$, where~$\id:\R^n\rightarrow\R^n$ denotes the identity map and $\rho_i$ is an affine map. Hence every vertex~$v$ of the fiber~$P\cap p_x^{-1}(x)$ equals~$(x,\rho_i(x))$ for some~$i$, $1\le i\le t$. Now it is straightforward to see that for the point~$x$ the equation~\eqref{eq:affinely_generated_def} holds.

 Now let us consider a point~$x\in p_x(P)$, which is not in the relative interior of the polytope~$p_x(P)$. If there is a point~$v$ in the fiber~$P\cap p_x^{-1}(x)$, which does not lie in~$\conv \setDef{(x,\rho_i(x))}{1 \le i \le t}$, then from the continuity reasons there is a point~$x'$ in the relative interior of~$p_x(P)$ and a point~$v'\in P\cap p_x^{-1}(x')$, such that~$v'$ does not lie in~$\conv \setDef{(x',\rho_i(x'))}{1 \le i \le t}$, which contradicts the result of the discussion above. Finally, since the point~$(x,\rho_i(x))$ lies in~$P\cap p_x^{-1}(x)$ for every~$1\le i\le t$, the equation~\eqref{eq:affinely_generated_def} holds also for all points~$x$ outside the relative interior of the polytope~$p_x(P)$. Thus the polytope~$P$ is a polyhedral relation of type~$(n,d)$, which is affinely generated by maps~$\rho_1$, \ldots, $\rho_t: \R^n \rightarrow \R^d$.

   Part "if":   Since~$P$ is a polyhedral relation of type~$(n,d)$ generated by affine maps~$\rho_1$, \ldots, $\rho_t: \R^n \rightarrow \R^d$, then for every~$S\subseteq \vertex(p_x(P))$ the following equation holds
    \begin{multline*}
    \conv\setDef{v\in\vertex(P)}{p_x(v)\in S}=\conv\setDef{v\in P}{p_x(v)\in S}=\\
      \conv\setDef{(x,\rho_j(x))}{x\in S,\, 1 \le j \le t }=\\
      \conv\setDef{(x,\rho_j(x))}{x\in \conv(S),\, 1 \le j \le t }=\\
      \conv\setDef{v\in P}{p_x(v)\in \conv(S)}\,.
    \end{multline*}
   Thus the statement follows from Theorem~\ref{thm:pfp_property_main}.

\end{proof}

The "only if" part of the Theorem~\ref{thm:pfp_isomorph_faces} follows from the characterization of the affinely generated polyhedral relations in~\cite{KP}. The above "only if" part of the proof is a variant of the proof for this characterization in~\cite{KP}. Additionally, from the proof of the "only if" part in Theorem~\ref{thm:pfp_isomorph_faces} we obtain also the next observation.

\begin{obs}
  For every pair $(P,p_x)$ with the PF-property the number of facets of the polytope~$p_x(P)$ is bounded from above by the number of facets of the polytope~$P$.
\end{obs}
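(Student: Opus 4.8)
The plan is to reuse the face produced in the proof of the ``only if'' direction of Theorem~\ref{thm:pfp_isomorph_faces} and then count facets through it. Recall that that proof exhibits, for any point $x$ in the relative interior of $p_x(P)$ and any vertex of the (non-empty) fiber $P\cap p_x^{-1}(x)$, a face $F$ of $P$ with $\dim(F)=\dim(p_x(P))$ and $p_x(F)=p_x(P)$, so that $\dim(F)=\dim(p_x(F))$. Hence the restriction of $p_x$ to $F$ is an affine surjection $F\to p_x(P)$ between polytopes of equal dimension, and therefore an affine isomorphism (its linear part restricts to a bijection between the affine hulls, which have the same dimension). Consequently $p_x|_F$ carries the face lattice of $F$ isomorphically onto the face lattice of $p_x(P)$; in particular $F$ and $p_x(P)$ have exactly the same number of facets, and it suffices to bound the number of facets of $F$ by the number of facets of $P$.

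For the latter I would invoke the standard fact that every facet of a face $F$ of a polytope $P$ has the form $F\cap P'$ for some facet $P'$ of $P$: a facet $G$ of $F$ is a (non-empty) face of $P$ properly contained in $F$, hence equals the intersection of the facets of $P$ containing it; not all of those facets contain $F$ (otherwise their intersection would contain $F$, whereas it equals $G\subsetneq F$), and if $P'$ is one that does not contain $F$, then $G\subseteq F\cap P'\subsetneq F$ and $F\cap P'$ is a face of $F$, so maximality of $G$ among proper faces of $F$ forces $G=F\cap P'$. Choosing one such $P'=\phi(G)$ for each facet $G$ of $F$ gives a map $\phi$ from the facets of $F$ to the facets of $P$, and $\phi$ is injective since $G=F\cap\phi(G)$ recovers $G$ from its image. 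Therefore the number of facets of $p_x(P)$, which equals the number of facets of $F$, is at most the number of facets of $P$.

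The only step requiring genuine care is the standard polytope lemma used in the second paragraph, plus a quick check that the degenerate situations cause no trouble: for instance $F=P$ when $p_x$ is already injective on $\aff(P)$ (then the inequality is an equality), or $p_x(P)$ being low-dimensional with no facets at all (then the bound is trivial). Everything else is bookkeeping resting on the already-established Theorem~\ref{thm:pfp_isomorph_faces}.
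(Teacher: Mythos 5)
Your proposal is correct and follows essentially the same route as the paper: the paper's proof consists of the single remark that some face of $P$ is affinely isomorphic to $p_x(P)$ (namely one of the faces $F_i$ constructed in the ``only if'' part of Theorem~\ref{thm:pfp_isomorph_faces}), leaving implicit the standard fact that a face of a polytope has at most as many facets as the polytope itself. You have simply made both halves of that one-line argument explicit, including a correct proof of the injective map from facets of $F$ to facets of $P$.
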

\begin{proof}
  It is enough to notice that there exists a face of the polytope~$P$, which is affinely isomorphic to the polytope~$p_x(P)$.
\end{proof}

{\bf Acknowledgment.} We thank the referees for their comments and suggestions, which  increased the readability of the presented paper.

\begin {thebibliography} {1}

\bibitem{CK} R. D. Carr and G. Konjevod, Polyhedral Combinatorics, {\it Tutorials on emerging methodologies and applications in Operations Research}, (2004).

\bibitem{CHV} V. Chv\'atal, On certain polytopes associated with graphs, {\it Journal of Combinatorial Theory B, 18\/} (1975) 138-154.

\bibitem{CCZ} M. Conforti, G. Cornu\'{e}jols and G. Zambelli,  Extended formulations in combinatorial optimization, {\it 4OR} (2010) 1-48.

\bibitem{CNP} G. Cornu{\'e}jols, D. Naddef and W. Pulleyblank, The Traveling Salesman Problem in Graphs with 3-edge Cutsets. {\it J. ACM} (1985) 383--410.

\bibitem{Je} R. Jeroslow,  On defining sets of vertices of the hypercube by linear inequalities, {\it Discrete Mathematics}, (1975) 119--124.

\bibitem{K} V. Kaibel Extended Formulations in Combinatorial Optimization,  {\em Optima 85\/} (2011) 2-7.

\bibitem {KP} V. Kaibel and K. Pashkovich,  Constructing extended formulations from reflection relations, in: {\it Proceedings of the 15th international conference on Integer programming and combinatoral optimization (New York, NY, 2011)} 287--300.

\bibitem{FM} F. Margot,  Composition de polytopes combinatoires: une approche par projection, {\it Thesis (\'{E}cole polytechnique f\'{e}d\'{e}rale de Lausanne, 1994)}.

\bibitem{R} J. Radon, Mengen konvexer K\"{o}rper, die einen gemeinsamen Punkt enthalten, {\it Mathematische Annalen}, (1921) 113-115.

\bibitem{SC} A. Schrijver, {\em Theory of Linear and Integer Programming},
Wiley, New York (1986).

\end{thebibliography}

\end{document}